\def\tp{\mathsf{T}}
\renewcommand{\u}{\boldsymbol{u}}
\newcommand{\x}{\boldsymbol{x}}
\newcommand{\w}{\boldsymbol{w}}
\newcommand{\R}{\mathbb{R}}
\newif\ifshow
\newtheorem{remark}{\bfseries Remark}
\newtheorem{example}{\bfseries Example}
\newtheorem{theorem}{\bfseries Theorem}
\title{Stochastic optimal control using semidefinite programming \\ for moment dynamics}
\author{Andrew Lamperski, Khem Raj Ghusinga, and  Abhyudai Singh
\thanks{ Andrew Lamperski  is with the Department of Electrical and Computer Engineering, University of Minnesota, Minneapolis, MN, USA 55455.
        {\tt\small alampers@umn.edu}}%
\thanks{Khem Raj Ghusinga is with the Department of Electrical and Computer Engineering, University of Delaware, Newark, DE, USA 19716.
        {\tt\small khem@udel.edu}}%
\thanks{Abhyudai Singh  is with the Faculty of Electrical and Computer Engineering, University of Delaware, Newark, DE, USA 19716.
        {\tt\small absingh@udel.edu}}
}
\begin{document}
\maketitle

\begin{abstract}
This paper presents a method to approximately solve stochastic optimal
control problems in which the cost function and the system dynamics
are polynomial. For stochastic systems with polynomial dynamics, the
moments of the state can be expressed as a, possibly infinite, system
of deterministic linear
ordinary differential equations.   
By casting the problem as a deterministic control
problem in moment space, semidefinite programming is used to find a
lower bound on the optimal solution. 
The constraints in the
semidefinite program are imposed by the ordinary differential
equations for moment dynamics and semidefiniteness of the outer
product of moments. 
From the solution to the semidefinite program, an approximate optimal
control strategy can be constructed using a least squares method. 
In the linear quadratic case, the method gives an exact solution to
the optimal control problem.
In more complex problems, an infinite number of moment differential
equations would be required to compute the optimal control law. In
this case, we give a procedure to increase the size of the
semidefinite program, leading to increasingly accurate approximations to
the true optimal control strategy. 
\end{abstract} 

\section{Introduction}
Stochastic optimal control problems frequently arise in a variety of settings such as engineering, management, finance, ecology, etc. where a cost function is minimized by choosing the inputs to a stochastic differential equation \cite{flemingcontrolled2006}. The solution to a stochastic optimal control problem is typically formulated using the value function approach \cite{YongZhou99}. It turns out that aside from a few special cases such as linear quadratic control problems, it is generally not possible to find explicit solutions. For other problems, numerical methods are employed which require discretization of the state-space and time, incur significant computational cost, and generally suffer from the curse of dimensionality. Another prevalent approach to study nonlinear stochastic control problems has been to linearize the system to be able to apply the linear stochastic control methods\cite{youngoptimal1988,jumarie1996improvement,Socha05}. However, in these methods it is difficult to get solutions with satisfactory accuracy as the linearization is only valid for small deviations.

Alternatively, one can transform the problem to moment-space wherein the cost function to be optimized is viewed as a function of moments of the state and control variables. Particularly when the cost function and the system dynamics are polynomial, the moment dynamics of the state is well characterized by linear ordinary differential equations, which allows one to study the problem as a deterministic optimal control problem. In \cite{jumarie1995practical,jumarie1996}, this approach has been used for few examples considering the system dynamics to be linear. In general, when the system dynamics is nonlinear, the system of differential equations for moment dynamics become infinite dimensional.

Here,  we provide a method to approximate the solution to stochastic optimal control problems wherein the system dynamics, and cost are polynomials. A lower bound for the optimal value of the cost function is computed using a semidefinite programming approach. Moreover, a polynomial control policy with time-varying coefficients can be extracted from the semidefinite program, using which an upper bound on the optimal value of the cost function can be computed via Monte Carlo simulations. The idea here is that if the difference between the upper and lower bounds is small, the controller extracted from the semidefinite program must be close to the optimal controller. Whereas for the systems for which moment dynamics is finite dimensional, the lower bound computed is unique; one can obtain an increasing sequence of lower bounds for a system with infinite-dimensional moment dynamics by increasing the number of moment equations and the number of constraints on the moment dynamics in the semidefinite program. For details on semidefinite programming, see \cite{cominetti2012modern, boyd2004convex}.

The paper is structured as follows. Section~\ref{sec:problem} discusses the problem statement, introduces some examples that we use to illustrate the method proposed in the paper, and briefly describes the results. In Section~\ref{sec:momentdynamics}, a system of ordinary differential equations characterizing the moment dynamics is derived for a stochastic differential equation. Section~\ref{sec:results} presents the main results of the paper. These results are illustrated via examples in Section~\ref{sec:examples}. In section~\ref{sec:conclusion}, conclusion of the paper and some directions of future work are given.
\subsection*{Notation}

Random variables will be denoted in bold: $\x$, and $\u$. Non-random
variables will be non-bold. For example, a specific value taken by
$\x$ would be denoted by $x$. 
The expected value of a random variable, $\boldsymbol{x}$ is denoted by $\left<\boldsymbol{x}\right>$.

\section{Problem}
\label{sec:problem}
\subsection{Setup}

This paper will provide a method to approximate the solution to scalar polynomial optimal control problems of the form:

\begin{subequations}
\label{eq:polyProb}
\begin{align} \label{eq:polyCost}
& \underset{\u}{\textrm{minimize}} && \left<\int_0^T c(\x_t,\u_t)dt +
                       h(\x(T))\right> \\
& \textrm{subject to} && 
d\x_t = f(\x_t,\u_t) dt + g(\x_t,\u_t) d\w_t \\
\label{eq:polyConstraint}
&&& b_l(\x_t,\u_t) \ge 0\quad \textrm{ for } l=1,\ldots,B \\
&&& \x(0) = x_0.
\end{align}
\end{subequations}
Here $\x_t\in \R$ is the state and $\u_t\in\R$ is the control;
$f(x,u): \R\times \R  \rightarrow \R$ and $g(x,u): \R\times\R
\rightarrow \R $ are polynomials that describe the system dynamics; 
$b_l(x,u): \R \times \R \rightarrow \R^M$ are polynomials
describing constraints; 
and $\w_t$ is a  Weiner process satisfying
\begin{align}
\left<d \w_t \right>=0, 
\quad
\left<d\w_t \, d\w_t^\top\right>= \,dt.
\end{align}
The control input $\u(t)$ is minimized over all Borel measureable
functions that are adapted to the filtration generated by
$\w(t)$. Typically, there is no loss of generality in restricting the
search to \emph{Markov} control policies, i.e. policies of the form,
$\u_t = \gamma(\x_t,t)$, where 
$\gamma$ is a Borel-measurable function. For details, see
\cite{flemingcontrolled2006}.

Since it is assumed that all of the functions, $c$, $h$, $f$, and $g$
are polynomials, without loss of generality they can be expressed in
the form:

\begin{subequations}\label{eq:polyNotation}
\begin{align}
& c(x,u) = \sum_{i=0}^{n_x} \sum_{j=0}^{n_u} c_{ij}x^i u^j,
&& 
h(x) = \sum_{i=0}^{n_x} h_i x^i, \\
& f(x,u) = \sum_{i=0}^{n_x} \sum_{j=0}^{n_u} f_{ij}x^i u^j, 
&& g(x,u) = \sum_{i=0}^{n_x} \sum_{j=0}^{n_u} g_{ij} x^i u^j, \\
& b_l(x,u) = \sum_{i=0}^{n_x} \sum_{j=0}^{n_u} b_{l,ij} x^i u^j.
\end{align}
\end{subequations}

\begin{example}\label{ex:LQR}
A simple example where explicit solutions can be calculated is the
linear quadratic regulator problem. 
For concreteness, a special case
is given by
\begin{subequations}\label{eq:LQROpt}
\begin{align}
& \textrm{minimize} && \left<\int_0^1 \left(\x_t^2 + \u_t^2\right) dt
                       \right>
\\
& \textrm{subject to} && 
d\x_t = \u_t dt + d\w_t \\
&&& \x_0 = 0.
\end{align}
\end{subequations}
See \cite{brysonapplied1975,lewisoptimal2008} for more detailed
discussion of this problem. 
\end{example}

\begin{example}\label{ex:cubicSys}
Another problem, which will display some of the more interesting
aspects of the problem is given by:
\begin{subequations}
\label{eq:cubicOpt}
\begin{align}\label{eq:cubicCost}
& \textrm{minimize} && \left<\int_0^1 \left(\x_t^2 + 0.1 \u_t^2
                       \right) dt
                       + \x_1^2
                       \right> \\
\label{eq:cubicDyn}
& \textrm{subject to} && d\x_t = \left((1.5)^2 \x_t - \x_t^3 + \u_t
                         \right) dt + d\w_t \\
&&& \x_0 = 0.
\end{align}
\end{subequations}
Unlike the linear quadratic regulator problem, it is not clear how to
compute exact solutions to this optimal control problem. 
\end{example}

\begin{example}\label{ex:fisheries}
The previous examples had no constraints constraints of the form
\eqref{eq:polyConstraint}. Such constraints often arise in
applications. For example, consider the modified version of the
optimal fisheries management from
\cite{alvarezoptimal1998,lunguoptimal1997}
\begin{subequations}
\begin{align}
& \textrm{maximize} && \left<
\int_0^T
\u_t
dt
\right> \\
& \textrm{subject to} &&  d\x_t = \left(\x_t-\gamma \x_t^2 -
                         \u_t\right)dt + \sigma \x_t d\w_t \\
&&& \x_0 = x_0 \\
&&& \x_t \ge 0 \\
&&& \u_t \ge 0.
\end{align}
\end{subequations}
Here $\x_t$ models the population in a fishery and $\u_t$ models the
rate of harvesting. As in the earlier works, a constraint that
$\x_t\ge 0$ is required to be physically meaningful. Also, without
this constraint, the optimal strategy would be to set $\u_t =
+\infty$. In other words, the objective would be unbounded without the
constraint. The constraint that $\u_t \ge 0$ encodes the idea that
fish are only being taken out, not put into the fishery. 

The primary difference between this formulation and that of
\cite{alvarezoptimal1998} and \cite{lunguoptimal1997}, is that the
cost is not discounted, and operates over a fixed, finite horizon. 

Note that this is a maximization problem, but this is
equivalent to minimizing the objective multiplied by $-1$.

\end{example}

The systems are assumed to be scalars for notational simplicity. 
Vector systems could be considered at the expense of extra book-keeping. 

\subsection{Description of Results}
\label{sec:description}
Let $v_{\mathrm{OPT}}$ be the optimal value of \eqref{eq:polyCost}. 
For polynomial systems, this paper provides a method based on
semidefinite programming to compute a lower bound on the optimal
value, $v_{\mathrm{SDP}} \le v_{\mathrm{OPT}}$. 

Furthermore, a control policy of the form 
\begin{equation}\label{eq:uCompute}
\u_t = p_0(t) + p_1(t) \x_t + p_2(t) \x_t^2 + \cdots + p_{n_p}(t) \x_t^{n_p}
\end{equation}
can be computed from the result of the semidefinite program. Let $v_p$
denote the value of \eqref{eq:polyCost} resulting from this
controller, which can be computed or estimated by simulations. It follows that 
\begin{equation}
v_{\mathrm{SDP}} \le v_{\mathrm{OPT}} \le v_p.
\end{equation}

While the true optimal value, $v_{\mathrm{OPT}}$ is typically unknown,
if $v_p - v_{\mathrm{SDP}}$ is small, the controller from
\eqref{eq:uCompute} must be close to optimal.

\section{Moment Dynamics of a Controlled SDE}
\label{sec:momentdynamics}
This paper will derive lower bounds for the problem
\eqref{eq:polyProb} by solving an optimal control problem for the
moments of $\x_t$. This section will derive differential equations
for these moments. 

If $q(x)$ is a twice-differentiable,
real-valued function, then the It\^o formula implies that \cite{oksendal03}
\begin{multline}\label{eq:Ito}
dq(\x_t) = \frac{\partial q(\x_t)}{\partial x} \left(
f(\x_t,\u_t) dt + g(\x_t,\u_t) d\w_t
\right)+  \\
\frac{1}{2} \frac{\partial^2 q(\x)}{\partial x^2} g(\x_t,\u_t)^2dt.
\end{multline}

Taking expected values of both sides results in a deterministic
differential equation:
\begin{multline}\label{eq:ExpectationDiffEq}
\frac{d}{dt} \left<q(\x_t)\right> = \\
\left<
\frac{\partial q(\x_t)}{\partial x} 
f(\x_t,\u_t) +  
\frac{1}{2} \frac{\partial^2 q(\x_t)}{\partial x^2} g(\x_t,\u_t)^2
\right>.
\end{multline}

The moments of $\x_t$ and $\u_t$ will be denoted by:
\begin{equation}
\label{eq:momentDef}
\mu_t^{x^iu^j} = \left< \x_t^i \u_t^j \right>.
\end{equation}
When $q(x)$ is a monomial, $q(x) = x^k$, and $f$ and $g$ are
polynomials, \eqref{eq:ExpectationDiffEq} becomes a linear
differential equation with respect to the moments.

\begin{example}\label{ex:cubicMoments}
Recall the dynamics from \eqref{eq:cubicDyn}. Then
\eqref{eq:ExpectationDiffEq} has the form
\begin{multline}
\frac{d}{dt} \left<q(\x_t)\right> 
=
\\
\left<
\frac{\partial q(\x_t)}{\partial x} 
\left((1.5)^2\x_t - \x_t^3 + \u_t\right) +  
\frac{1}{2} \frac{\partial^2 q(\x_t)}{\partial x^2}
\right>.
\end{multline}

For $q(x) = x$, we have that $\frac{\partial q(x)}{\partial x} = 1$
and $\frac{\partial^2 q(x)}{\partial x^2}=0$. Thus, the following holds:
\begin{align}\nonumber
\frac{d}{dt}\left<\x_t\right> &= \frac{d}{dt} \mu_t^{x} \\
\nonumber
&= \left<
(1.5)^2\x_t - \x_t^3 + \u_t 
\right>\\
\label{eq:cubicFirstM}
&= (1.5)^2 \mu_t^x -\mu_t^{x^3} + \mu_t^{u}.
\end{align}
A similar argument shows that 
\begin{align}
\label{eq:cubicSecondM}
\frac{d\mu_t^{x^2}}{dt} &= 2 \left((1.5)^2 \mu_t^{x^2} -\mu_t^{x^4} +
                          \mu_t^{xu}
\right) + 1 \\
\nonumber
\frac{d\mu_t^{x^k}}{dt} &= k\left((1.5)^2 \mu_t^{x^k} -\mu_t^{x^{k+2}} +
                          \mu_t^{x^{k-1}u}
\right) \\
\label{eq:cubicKthM}
& + \frac{k(k-1)}{2} \mu_t^{x^{k-2}} \textrm{ for } k\ge 3.
\end{align}

In this example, we see that the differential equation for $\mu_t^x$
depends on the third moment, $\mu_t^{x^3}$. More generally, the
differential equation for $\mu_t^{x^k}$ depends on the higher moment,
$\mu_t^{x^{k+2}}$. In this case, no moment, $\mu_t^{x^k}$, can be
described using a finite set of differential equations. 
In this case, it is said that the moments are not \emph{closed}. 
Several \emph{moment closure} methods have been developed to
approximate moment dynamics 
using a finite number of differential equations both for discrete, and continuous state Markov models\cite{socha2008linearization, Kuehn16, naasell03,SinghHespanhaLogNormal,SinghHespanhaDM,soltani2015conditional}. 
Future work will involve combining the work in this paper with moment
closure methods. 
\end{example}

\section{Results}
\label{sec:results}

\subsection{Lower Bounds by Semidefinite Programming}

The moment differential equations described in the previous section
must be satisfied for any choice of input strategy for $\u_t$. Thus,
they form a natural candidate for constraints in an optimization
problem.

Another constraint arises from the fact that outer products are
positive semidefinite, and this semidefinite constraint is perserved 
by taking expectations. 
For example:
\begin{multline}
\left<
\begin{bmatrix}
1 \\
\x_t \\
\x_t^2 \\
\u_t
\end{bmatrix}
\begin{bmatrix}
1 &
\x_t &
\x_t^2 &
\u_t
\end{bmatrix}
\right>
= \\
\begin{bmatrix}
1 & \mu_t^x & \mu_t^{x^2} & \mu_t^u \\
\mu_t^x & \mu_t^{x^2} & \mu_t^{x^3} & \mu_t^{xu} \\
\mu_t^{x^2} & \mu_t^{x^3} & \mu_t^{x^4} & \mu_t^{x^2u} \\
\mu_t^{u} & \mu_t^{xu} & \mu_t^{x^2 u} & \mu_t^{u^2}
\end{bmatrix}
\succeq 0.
\end{multline}

Furthermore, if the system has inequality constraints of the from
\eqref{eq:polyConstraint}, then for any $r\ge 1$, it must be the case
that $\left(b_l(\x_t,\u_t)\right)^{r}\ge 0$, and thus 
the following must
hold
\begin{equation}\label{eq:constraintMomentIneq}
\left<
(b_l(\x_t,\u_t))^r
\right> \ge 0.
\end{equation}

Recall the notation for the polynomials from
\eqref{eq:polyNotation}. The following theorem gives a lower bound of
the original problem, \eqref{eq:polyProb}, based on continuous-time
semidefinite programming. 

\begin{theorem}
{\it
For any integers $K\ge 1$, $r_1 \ge 1,\ldots r_B \ge 1$, $d_x \ge 1$, and $d_u\ge 1$, the optimal
value of \eqref{eq:polyProb} is always at least as large as the
optimal value of the following optimal control problem:
\begin{subequations}\label{eq:momentOpt}
\begin{align}
\label{eq:momentObjective}
& \textrm{minimize} && 
\int_0^T\sum_{i=0}^{n_x}\sum_{j=0}^{n_u} c_{ij} \mu_t^{x^i u^j}dt +
\sum_{i=0}^{n_x}h_i \mu_T^{x^i}
\\
\label{eq:momentDyn}
& \textrm{subject to} &&
\frac{d\mu_t^{k}}{dt} = 
k \sum_{i=0}^{n_x}\sum_{j=0}^{n_u} f_{ij} \mu_t^{x^{i+k-1}u^j}
\\ \nonumber
&&&
+\frac{k(k-1)}{2} \sum_{i,r=0}^{n_x}\sum_{j,s=0}^{n_u} g_{ij}g_{rs} 
\mu_t^{x^{i+r+k-2}u^{j+s}} \\
\nonumber
&&& \textrm{ for } k=1,\ldots,K \\
\label{eq:momentIC}
&&& \mu_0^k = x_0^k \quad \textrm{ for } k=1,\ldots,K \\
\nonumber
&&& 
\hspace{-5em}
\sum_{i_1,\ldots,i_{r}}
    \sum_{j_1,\ldots,j_{r}}b_{l,i_1j_1}
    \cdots b_{l,i_{r}j_{r}} \mu_t^{x^{i_1+\cdots+i_{r}}
    u^{j_1+\cdots+j_{r}}}
\ge 0 \\
\label{eq:momentIneq}
&&& \textrm{ for } l=1,\ldots,B \textrm{ and }  r = 1,\ldots,r_l
\\
\label{eq:momentMat}
&&& 
\hspace{-5em}
\left[
\arraycolsep=1.4pt
\begin{array}{ccccccc}
1 & \mu_t^x & \cdots & \mu_t^{x^{d_x}} & \mu_t^u & \cdots &
\mu_t^{u^{d_u}} \\
\mu_t^{x} & \mu_t^{x^2} & \cdots & \mu_t^{x^{d_x+1}}
&
\mu_t^{xu} & \cdots & \mu_t^{xu^{d_u}} \\
\vdots & \vdots & & \vdots & \vdots && \vdots \\ 
\mu_t^{x^{d_x}} & \mu_t^{x^{d_x+1}}
& \cdots & \mu_t^{x^{2d_x}} & \mu_t^{x^{d_x} u}
&\cdots & \mu_t^{x^{d_x} u^{d_u}} \\
\mu_t^u & \mu_t^{xu} & \cdots & \mu_t^{x^{d_x}u} & \mu_t^{u^2} 
&\cdots & \mu_t^{u^{d_u+1}} \\
\vdots & \vdots & & \vdots & \vdots & & \vdots \\
\mu_t^{u^{d_u}}  & \mu_t^{x u^{d_u}} & \cdots &
\mu_t^{x^{d_x}u^{d_u}} & \mu_t^{u^{d_u+1}} & \cdots & \mu_t^{u^{2d_u}}
\end{array}
\right]
\succeq 0.
\end{align}
\end{subequations}
In \eqref{eq:momentIneq}, the sums over $i_1,\ldots,i_{r_l}$ range
from $0$ to $n_x$, while the sums over $j_1,\ldots,j_{r_l}$ range from
$0$ to $n_u$.
}
\end{theorem}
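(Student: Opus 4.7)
The plan is to argue by relaxation: show that any admissible Markov policy $\u_t$ for the stochastic problem \eqref{eq:polyProb} induces, through the moments $\mu_t^{x^i u^j} = \left<\x_t^i \u_t^j\right>$, a feasible trajectory of the deterministic moment optimization \eqref{eq:momentOpt} that attains the same objective value. Since the feasible set of the moment problem is at least as large as the image of the stochastic feasible set under this moment map, minimizing over the larger set yields a value no greater than $v_{\mathrm{OPT}}$.

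The verification of feasibility proceeds constraint by constraint. First, applying the It\^o expectation formula \eqref{eq:ExpectationDiffEq} with the monomial $q(x) = x^k$ gives $\partial q/\partial x = k x^{k-1}$ and $\partial^2 q/\partial x^2 = k(k-1)x^{k-2}$, and expanding $f$, $g$, and $g^2$ using \eqref{eq:polyNotation} yields exactly the moment ODE \eqref{eq:momentDyn} after interchanging the sums with the expectation. Second, the initial condition $\x_0 = x_0$ is deterministic, so $\mu_0^{x^k} = x_0^k$ as required by \eqref{eq:momentIC}. Third, the pathwise constraint $b_l(\x_t,\u_t) \ge 0$ implies $(b_l(\x_t,\u_t))^r \ge 0$ for every $r\ge 1$, and multiplying out the $r$-fold product using the coefficient representation of $b_l$ from \eqref{eq:polyNotation} and taking expectations produces \eqref{eq:momentIneq}. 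Fourth, for the vector $v_t = (1,\x_t,\ldots,\x_t^{d_x},\u_t,\ldots,\u_t^{d_u})^\tp$, the outer product $v_t v_t^\tp$ is pointwise positive semidefinite, and expectation preserves this cone, yielding the matrix inequality \eqref{eq:momentMat}.

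Finally, by linearity of expectation applied to the polynomial integrand $c$ and terminal cost $h$ expressed via their coefficients in \eqref{eq:polyNotation}, the stochastic objective in \eqref{eq:polyCost} equals the deterministic objective \eqref{eq:momentObjective} evaluated at the induced moments. Taking the infimum over admissible stochastic policies then gives $v_{\mathrm{SDP}} \le v_{\mathrm{OPT}}$.

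The only technically delicate step is justifying the interchange of expectation, time differentiation, and summation in the derivation of \eqref{eq:momentDyn}, since solutions of the controlled SDE only have finite moments under suitable integrability assumptions on $f$, $g$, and $\u_t$; strictly speaking, one should either restrict attention to policies that keep all relevant moments finite on $[0,T]$ or treat \eqref{eq:momentDyn} as being required only along admissible stochastic trajectories where the relevant moments exist. Beyond this regularity caveat, every step of the argument is a routine consequence of It\^o's formula, linearity of expectation, and the positivity-preserving property of expectation.
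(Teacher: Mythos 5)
Your proposal is correct and follows essentially the same relaxation argument as the paper: verify that the moments induced by any admissible policy satisfy each constraint of \eqref{eq:momentOpt} and attain the same objective value, so the moment problem is a relaxation whose optimum lower-bounds $v_{\mathrm{OPT}}$. Your added caveat about the integrability needed to justify the moment ODEs is a reasonable refinement the paper leaves implicit, but it does not change the structure of the argument.
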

\begin{proof}
The cost function, \eqref{eq:momentObjective}, is exactly, the
original cost, \eqref{eq:polyCost}, expressed in terms of the
moments. Given any policy for $\u_t$, the moments of $\x_t$ must
satisfy \eqref{eq:momentDyn} with initial conditions given by
\eqref{eq:momentIC}. If the system has inequality constraints, as in
\eqref{eq:polyConstraint}, then \eqref{eq:constraintMomentIneq} must
hold. The constraint from \eqref{eq:momentIneq} is exactly
\eqref{eq:constraintMomentIneq} written in terms of the moments. 

The final constraint on the the matrix of moments,
from \eqref{eq:momentMat}, expresses the following semidefiniteness
constraint on the expected value of an outer product:
\begin{equation*}
\left<
\begin{bmatrix}
1\\
\x_t \\
\vdots \\ 
\x_t^{d_x} \\
\u_t \\
\vdots \\
\u_t^{d_u}
\end{bmatrix}
\begin{bmatrix}
1\\
\x_t \\
\vdots \\ 
\x_t^{d_x} \\
\u_t \\
\vdots \\
\u_t^{d_u}
\end{bmatrix}^\tp
\right> \succeq 0.
\end{equation*}
Thus, for every policy, the moments of $\x_t$ and $\u_t$ must satisfy
\eqref{eq:momentMat}. Thus, the constraints \eqref{eq:momentDyn}, \eqref{eq:momentIC},\eqref{eq:momentIneq} and
\eqref{eq:momentMat} are satisfied by every feasible solution for
\eqref{eq:polyProb}. Since their cost functions coincide, \eqref{eq:momentOpt}
is a relaxation of \eqref{eq:polyProb}, and so its optimal solution is
a lower bound to the optimal solution of \eqref{eq:polyProb}. 
\end{proof}

Recall that in some systems, as seen in
Example~\ref{ex:cubicNonClosed}, the moments are not closed. Thus, no
finite number of moment equations will be sufficient to describe the
dynamics exactly. In this case, \eqref{eq:momentOpt} can be used to
construct a sequence of increasing lower bounds by increasing the
number of moment equations, $K$, and increasing the size of the moment
matrix from \eqref{eq:momentMat} correspondingly. The value of the
lower bound increases because the feasible set becomes more
constrained. 

\begin{remark}\label{rem:chooseSize}
There is a large amount of flexibility in choosing the size of the
semidefinite program in \eqref{eq:momentOpt}. One sensible procedure
for choosing the sizes is as follows. Fix $d_x$ and $d_u$. This will
constrain the moment matrix from \eqref{eq:momentMat} to be of size
$(1+d_x+d_u) \times (1+d_x+d_u)$. Then choose the number of moment
differential equations, $K$, and the number of inequality constraints,
$r_1,\ldots, r_B$ to be the largest values such that every moment in
the corresponding constraints is contained in the moment matrix. This
procedure is used in the examples in this paper. 
\end{remark}

As with deterministic continuous-time optimal control problems \cite{raosurvey2009}, the
cost integral, \eqref{eq:momentObjective}, can be discretized as a Riemann
sum, and the dynamic constraints, \eqref{eq:momentDyn}, can be
discretized using Euler integration. This results in a
finite-dimensional semidefinite program. 
Reasonably sized problems, can be handled with off-the-shelf tools for numerical
optimization \cite{diamondcvxpy2016,odonoghueconic2016}. Future work
will developed specialized methods for solving this problem that take
advantage of the specialized structure as an optimal control problem.

\subsection{Constructing the Controller}
\label{sec:construction}

This subsection will describe a procedure for computing a control
strategy of the form in \eqref{eq:uCompute} from a collection of
moments of $\x_t$ and $\u_t$. 

Assume that the $\u_t$ is generated according to
\eqref{eq:uCompute}. Then for every $k\ge 0$, the following holds:
\begin{align}
\nonumber
\mu_t^{x^k u} &= 
\left<
\x_t^k \u_t 
\right> \\
\nonumber
&=
\left<
\x_t^k \left(
p_0(t) + p_1(t) \x_t  + \cdots + p_{n_p}(t) \x_t^{n_p}
\right)
\right> \\
\label{eq:gainConstraint}
&= p_0(t)\mu_t^{x^k} + p_1(t) \mu_t^{x^{k+1}} 
  + 
\cdots + p_{n_p}(t) \mu_t^{x^{k+n_p}}.
\end{align}

Given a collection of moments, as computed from \eqref{eq:momentOpt}, 
coefficients that approximately satisfy \eqref{eq:gainConstraint} may
be computed using least-squares optimization, with the following
objective function:
\begin{equation}\label{eq:pLSTSQ}
\left\|
\arraycolsep=1.4pt
\left[
\begin{array}{cccc}
1 & \mu_t^{x} & \cdots & \mu_t^{x^{n_p}} \\
\mu_t^x & \mu_t^{x^2} & \cdots & \mu_t^{x^{n_p+1}} \\
\vdots & \vdots && \vdots \\
\mu_t^{x^{m}} & \mu_t^{x^{m+1}} & \cdots & 
\mu_t^{x^{m+n_p}}
\end{array}
\right]
\begin{bmatrix}
p_0(t) \\
p_1(t) \\
\vdots \\
p_{n_p}(t)
\end{bmatrix}
-
\begin{bmatrix}
\mu_t^{u} \\
\mu_t^{xu} \\
\vdots \\
\mu_t^{x^{m}u}
\end{bmatrix}
\right\|^2.
\end{equation}
Of course, to pose this optimization problem, the number of moment
differential equations, and the size of the moment matrix from
\eqref{eq:momentOpt} must be large enough so that all of the moments
required for \eqref{eq:pLSTSQ} are computed. 

In practice, the coefficients, $p_i(t)$ are computed at the discrete
time points at which the moments of $\x_t$ and $\u_t$ are computed. 

The controller computed from \eqref{eq:pLSTSQ} is a Markov policy, and
thus feasible. As discussed in Subsection~\ref{sec:description}, the
value of the average cost produced by this controller will always give an
upper bound on the true optimal value.

\section{Examples}
\label{sec:examples}
\begin{example}
Recall the linear quadratic regulator problem from
Example~\ref{ex:LQR}. This problem can be cast in the form of
\eqref{eq:momentOpt} as:

\begin{subequations}
\label{eq:LQRMomentProb}
\begin{align}
& \textrm{minimize} & & \int_0^1 (\mu_t^{x^2} + \mu_t^{u^2} ) dt \\
& \textrm{subject to} && \frac{d\mu_t^x}{dt} = \mu_t^{u} \\
&&& \frac{d\mu_t^{x^2}}{dt} = 2 \mu_t^{xu} + 1 \\
&&& \mu_0^x = \mu_0^{x^2} =  0\\
&&& 
\begin{bmatrix}
1 & \mu_t^x & \mu_t^u \\
\mu_t^x & \mu_t^{x^2} & \mu_t^{xu} \\
\mu_t^u & \mu_t^{xu} & \mu_t^{u^2}
\end{bmatrix}
 \succeq 0.
\end{align}
\end{subequations}
In this example, the moment equations are closed, since moments higher
than $2$ are not required to compute the solution. The classical
theory of optimal control shows that the optimal solution
to the original regulator problem is of the form 
\begin{equation*}
\u_t = L_t \x_t,
\end{equation*}
where $L_t$ is a gain computed from a Riccati differential
equation. Using the Pontryagin maximum principle, it can be shown that
the optimal solution to \eqref{eq:LQRMomentProb} is given by:
\begin{align*}
\begin{bmatrix}
1 & \mu_t^x & \mu_t^u \\
\mu_t^x & \mu_t^{x^2} & \mu_t^{xu} \\
\mu_t^u & \mu_t^{xu} & \mu_t^{u^2}
\end{bmatrix}
 &= 
\begin{bmatrix}
1 & 0 & 0 \\
0 & \mu_t^{x^2} & L_t \mu_t^{x^2} \\
0 & L_t \mu_t^{x^2} & L_t^2 \mu_t^{x^2}
\end{bmatrix} \\
\dot \mu_t^{x^2} &= 2L_t \mu_t^{x^2} + 1, \quad \mu_0^{x^2} = 0.
\end{align*}
\end{example}

\begin{example}\label{ex:cubicNonClosed}
Recall the system from Examples~\ref{ex:cubicSys}
and~\ref{ex:cubicMoments}. The cost function, \eqref{eq:cubicCost} can
be written in moment form as
\begin{equation*}
\int_0^1 \left(
\mu_t^{x^2} + \mu_t^{u^2}
\right)dt + \mu_1^{x^2}. 
\end{equation*}
The problem can be approximated using \eqref{eq:momentOpt} using this
cost, as well as the moment dynamics from \eqref{eq:cubicFirstM},
\eqref{eq:cubicSecondM}, and \eqref{eq:cubicKthM}. Recall that these
moment equations are not closed. Thus, the optimal value can be
approximated by solving \eqref{eq:momentOpt} for increasingly large
numbers of moment equations and increasingly large moment matrices. 

For comparison purposes, Fig.~\ref{fig:cubicCTG} plots the cost-to-go
function,
\begin{equation}\label{eq:cubicCTG}
\int_t^1 \left(\mu_{\tau}^{x^2}+\mu_{\tau}^{u^2}\right) d\tau + \mu_1^{x^2},
\end{equation}
for various sizes of the optimization problem in
\eqref{eq:momentOpt}. For sufficiently large semidefinite programs, an
optimal value of $0.60$ is obtained. 

Using the least-squares method from Subsection~\ref{sec:construction},
an order-$3$ controller was constructed from the solution to the SDP
with $K=4$ moment equations, and moment matrix $d_x=3$ and
$d_u=1$. (This corresponds to a moment matrix of size $(1 + d_x + d_u)
\times  (1 + d_x + d_u) = 5\times 5$.) Running 2000 trials with this
controller resulted in an average cost of $0.67$. Thus, the true
optimal value is likely to lie between $0.60$ and $0.67$. In contrast,
with no control, the average value obtained was $2.4$.

\begin{figure}
\centering
\includegraphics[width=\linewidth]{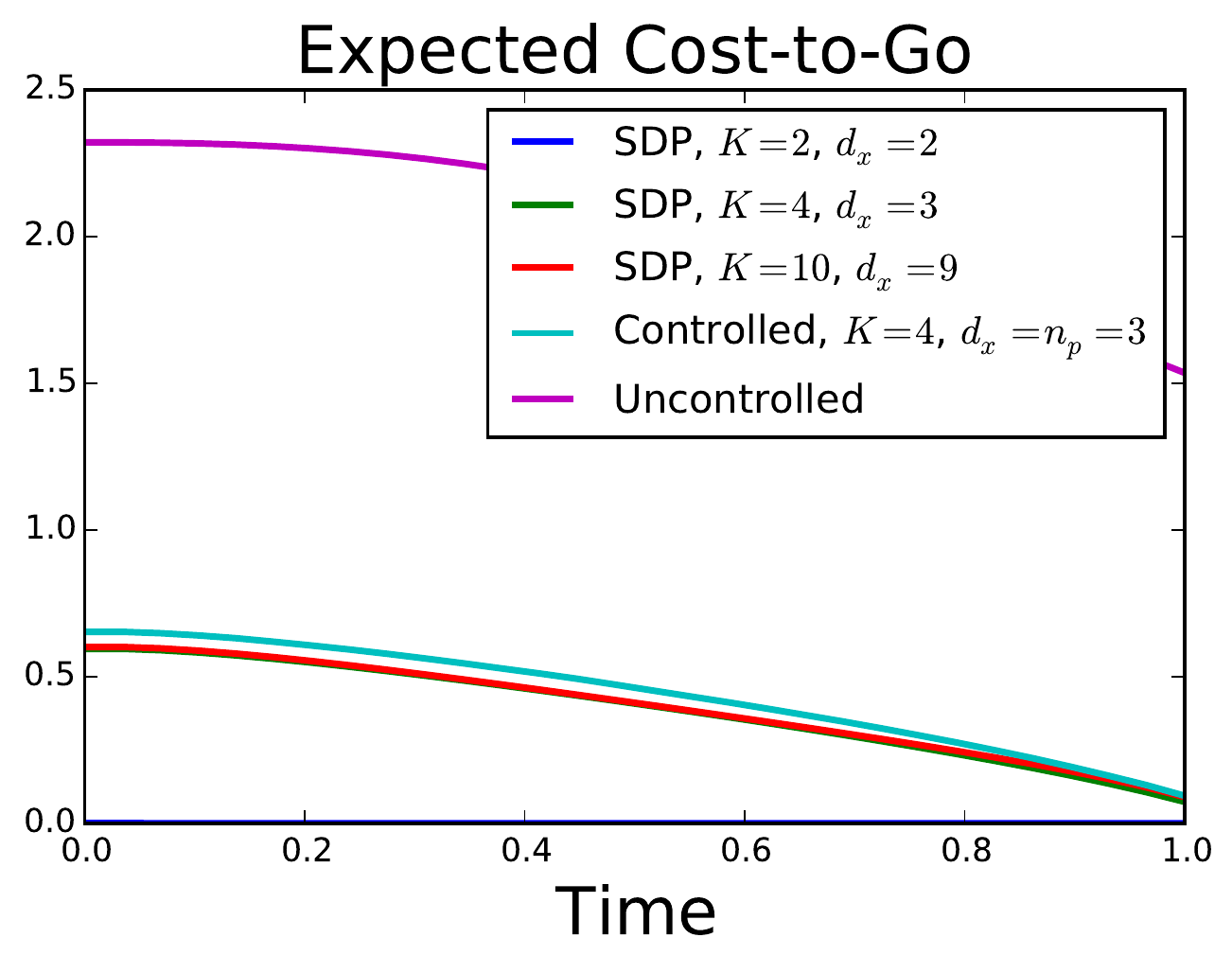}
\caption{\label{fig:cubicCTG} Values of the cost-to-go function,
  \eqref{eq:cubicCTG} are plotted. In all cases, $d_u = 1$, which
  implies the first two moments of $\u_t$ are included in the moment
  matrix. For the smallest SDP, with $K=2$ and $d_x=2$, the moments
  are too unconstrained, and an optimal value of near $0$ is obtained.
  However, with $K=4$ moment equations and $d_x=3$, an optimal value
  of $0.53$ is obtained. Increasing the size to $K=10$ and $d_x=9$
  barely changes this value, or the cost-to-go function. An order-$3$
  controller was constructed from the $K=4$ and $d_x=3$ SDP
  solution. The average cost-to-go of 2000 runs with this controller
  is plotted. The cost-to-go function is quite close to cost-to-go
  from the SDP, aside from a deviation near the end of the
  trajectory. 
  For comparison, the average cost-to-go of 2000 runs of
  the uncontrolled system is also plotted. The average controlled cost
  is $0.66$, compared with an average  uncontrolled cost of $2.4$.}
\end{figure}
\end{example}

\begin{example}

\begin{figure}
\centering
    \begin{minipage}[b]{\linewidth}
    \centering
    \includegraphics[width=\linewidth]{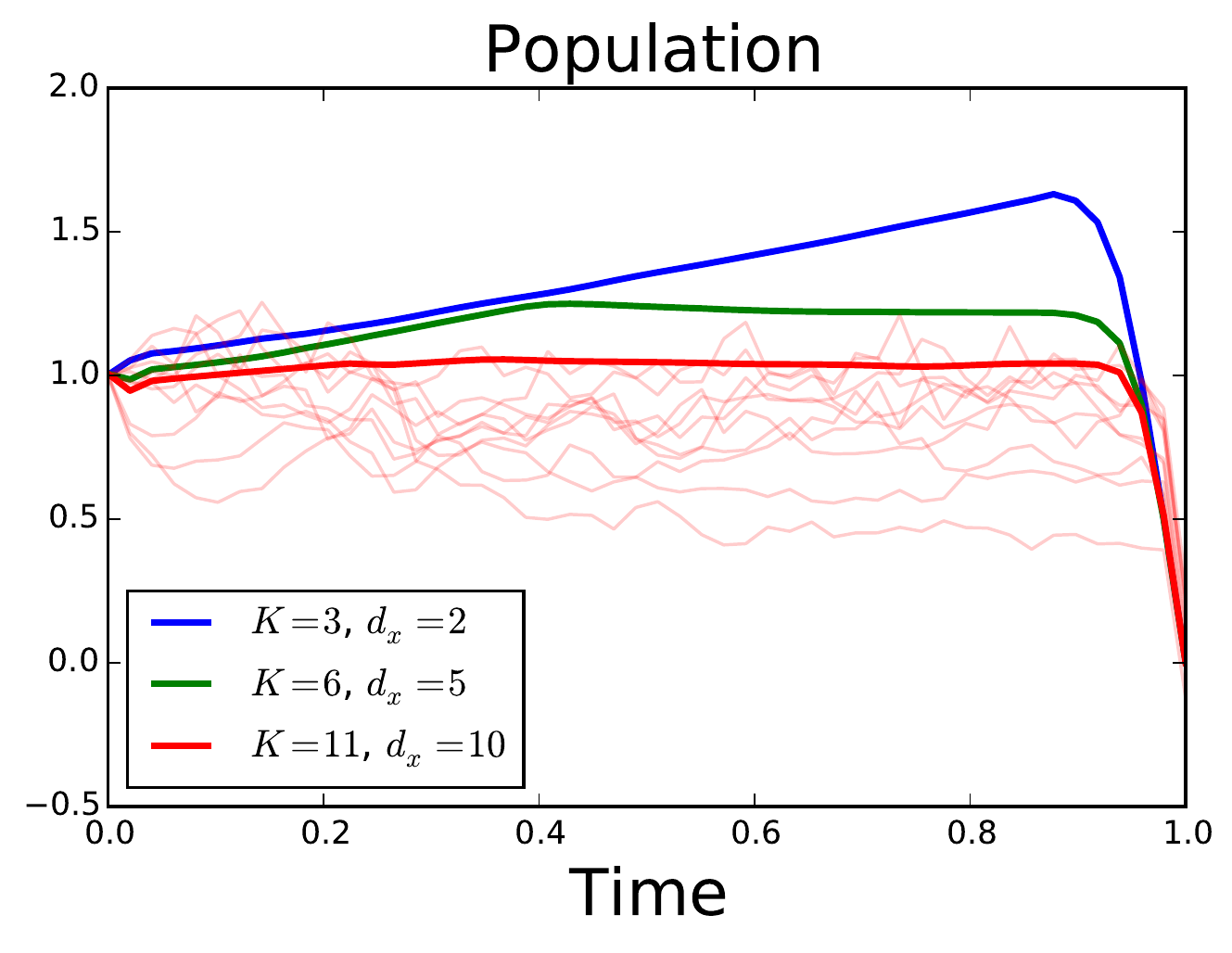}
    \subcaption{\label{fig:population}}
    \end{minipage}
    \begin{minipage}[b]{\linewidth}
    \centering
    \includegraphics[width=\linewidth]{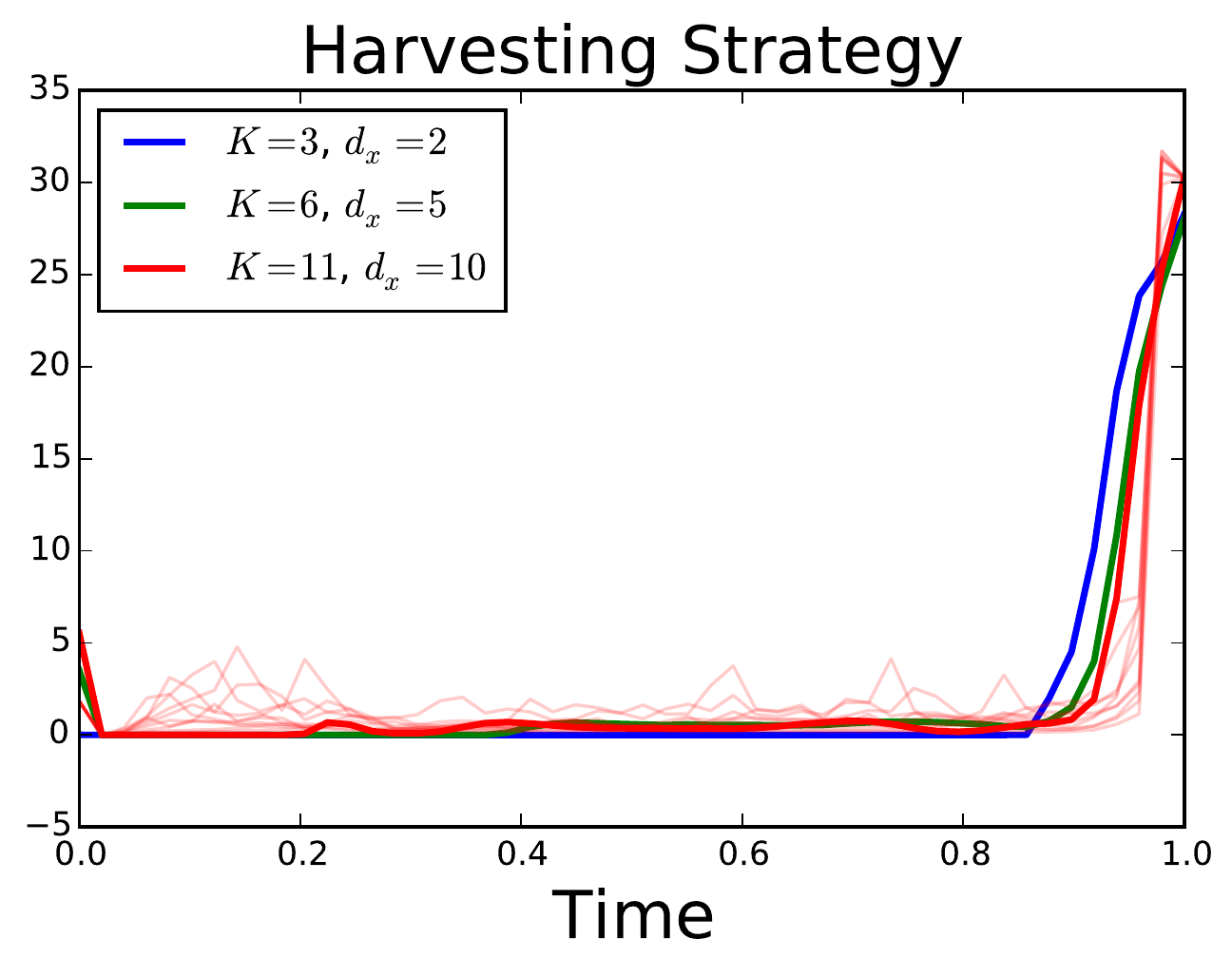}
    \subcaption{\label{fig:harvesting}}
    \end{minipage}
    \caption{ \label{fig:fisheries} \ref{fig:population} The thick
      solid lines show the mean population, $\left<\x_t\right>$
      computed for various sizes of the SDP. In each case $d_u=1$ and
      $d_x$ varies. The sizes of the SDP are
      chosen based on the procedure from
      Remark~\ref{rem:chooseSize}. Using the moments from the $d_x=10$
      SDP, a controller was computed from the procedure from
      Subsection~\ref{sec:construction}. The thin red lines show the
      population $20$ runs of this controller. \ref{fig:harvesting}
      The thick solid lines show mean harvesting rate,
      $\left<\u_t\right>$. Similar to \ref{fig:population}, the thin
      red lines show the harvesting rates in $20$ runs of the computed
      controller. The optimal expected harvest for the various SDPs is
    computed as $2.31$, $2.17$, and $2.11$, for $d_x=2$, $5$, and
    $10$, respectively. Using $1000$ runs of the controlled process
    results in an average harvest of $1.62$. Thus, the true optimal
    harvest is likely to lie somewhere between $1.62$ and $2.11$.}
\end{figure}

Recall Example~\ref{ex:fisheries} on optimal harvesting
for fisheries. 
This problem has moment dynamics given by
\begin{align*}
\dot \mu_t^x &= \mu_t^x -  \gamma \mu_t^{x^2} - \mu_t^u \\
\dot \mu_t^{x^2} &= 2\left(\mu_t^{x^2} - \gamma \mu_t^{x^3} - \mu_t^{xu}\right) + \sigma^2 \mu_t^{x^2} \\
\dot \mu_t^{x^k} &= k\left(\mu_t^{x^k}  - \gamma \mu_t^{x^{k+1}} -
                   \mu_t^{x^{k-1} u}\right) + \frac{1}{2} k(k-1)
                   \mu_t^{x^k}  
\\
&\quad \textrm{for}\quad k \ge 3.
\end{align*}

As with the previous example, the moments are not closed. Furthermore,
this problem has inequality constraints, which imply moment
inequalities,  \eqref{eq:momentIneq}, of the form
\begin{equation*}
\mu_t^{x^r} \ge 0,\qquad \mu_t^{u^r} \ge 0,
\end{equation*}
for $r\ge 1$. 

A plot of the results of the SDP, \eqref{eq:momentOpt}, and a
controller computed from the least squares procedure,
\eqref{eq:pLSTSQ}, is shown in Fig.~\ref{fig:fisheries}. 
From these figures, we can see that a strategy that emerges as the
SDP size increases. Through most of the horizon, harvesting is low and
the population is kept near a constant level. Then, near the end of
the horizon, a large harvest drives the population to $0$. 
\end{example}

\section{Conclusion}
\label{sec:conclusion}
This paper studied a method of solving stochastic optimal control using moment equations. The approach consists of formulating a semidefinite program, with constraints and cost function represented in terms of the moments. Several extensions of this work are possible. For example, it would be interesting to investigate whether by using an appropriate moment-closure for nonlinear systems, we can get a lower bound to the optimal value at low orders of moment truncation. Furthermore, several other works have used moment approximations in conjugation with other techniques for stochastic optimal control \cite{crespo2002stochastic, crespo2003stochastic, xu2012moment,xu2009nonlinear,wojtkiewicz2001moment}. We would like to compare the performance of different controllers, including the computational cost of each.

\section*{Acknowledgments} 
A.S. is supported by the National Science Foundation Grant DMS--1312926.
\bibliography{bibLoc}

\end{document}
